\newcommand{\Z}{\mathbb Z}
\newcommand{\Q}{\mathbb Q}
\newcommand{\field}{F}
\newcommand{\ie}{\textit{i.e.}, }
\newcommand{\Hecke}{\mathcal H}
\newcommand{\Mod}[1]{#1\text{-}\mathrm{Mod}}
\newcommand{\ol}[1]{\overline{#1}}
\newcommand{\alg}[1]{\mathbf{#1}}
\newcommand{\set}[2]{\left\{#1\,\middle|\,#2\right\}}
\newcommand{\ideal}[1]{\mathfrak{#1}}
\newcommand{\longtwoheadrightarrow}%
{\relbar\joinrel\twoheadrightarrow}
\newcommand{\longhookrightarrow}%
{\lhook\joinrel\relbar\joinrel\rightarrow}
\DeclareMathOperator{\GL}{GL}
\DeclareMathOperator{\id}{id}
\DeclareMathOperator{\Image}{Im}
\DeclareMathOperator{\Ker}{Ker}
\DeclareMathOperator{\Rad}{Rad}
\newtheorem{prop}{Proposition}
\newtheorem{thm}[prop]{Theorem}
\newtheorem{lem}[prop]{Lemma}
\newtheorem{cor}[prop]{Corollary}
\newtheorem{fact}[prop]{Fact}
\theoremstyle{definition}
\newtheorem*{defn}{Definition}
\newtheorem*{claim*}{Claim}
\theoremstyle{remark}
\newtheorem{rmk}[prop]{Remark}
\title{Localization of the Parabolic Hecke Algebra at a Strictly Positive
Element}
\author{Claudius Heyer}
\address{Mathematisches Institut, Westf\"alische Wilhelms-Universit\"at
M\"unster, Einsteinstra\ss{}e 62, D-48149 M\"unster, Germany}
\email{cheyer@uni-muenster.de}
\subjclass[2010]{11E95, 20C08, 20G25}
\begin{document}
\begin{abstract} 
Let $\mathbf{P}$ be a parabolic subgroup with Levi $\mathbf{M}$ of a connected
reductive group defined over a locally compact non-archimedean field $F$. Given
a certain compact open subgroup $\Gamma$ of $\mathbf{P}(F)$, this note proves
that the Hecke algebra $\mathcal{H}(\mathbf{M}(F))$ of $\mathbf{M}(F)$ with
respect to $\Gamma\cap \mathbf{M}(F)$ is a left ring of fractions of the Hecke
algebra $\mathcal{H}(\mathbf{P}(F))$ of $\mathbf{P}(F)$ with respect to
$\Gamma$. This leads to a characterization of
$\mathcal{H}(\mathbf{P}(F))$-modules that come from
$\mathcal{H}(\mathbf{M}(F))$-modules.
\end{abstract} 
\maketitle

\section{Introduction}
\subsection{Motivation} 
Let $\field$ be a locally compact non-archimedean field. Let $\alg P$ be a
parabolic $\field$-subgroup of a connected reductive $\field$-group $\alg G$.
Denote $\alg U$ the unipotent radical and $\alg M$ a Levi subgroup of $\alg P$.
For smooth representations of $\alg P(\field)$ on $k$-vector spaces, for a field
$k$, the following fact\footnote{I learned it from Peter Schneider who observed
it from \cite[Lemma I.2.1]{Harris-Taylor.2001} which gives a rather complicated
proof for a special case.
}
seems to be well-known:
\begin{fact}\label{fact} 
If $\pi$ is a smooth representation of $\alg P(\field)$ whose
restriction to $\alg M(\field)$ is admissible, then $\alg U(\field)$ acts
trivially on $\pi$.
\end{fact}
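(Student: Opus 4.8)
The plan is to pit a \emph{contracting element} of the Levi against the admissibility hypothesis. By the structure theory of parabolic subgroups, $\alg P$ is cut out by an $\field$-rational cocharacter $\lambda\colon\mathbb G_{m}\to\alg G$, meaning $\alg P=\alg P_{\alg G}(\lambda)$, $\alg U=\alg U_{\alg G}(\lambda)$ and $\alg M=Z_{\alg G}(\lambda)$; fixing a uniformizer $\varpi$ of $\field$, put $z:=\lambda(\varpi)\in\alg M(\field)$ (after replacing $\lambda$ by $-\lambda$ if needed). Two features of $z$ are decisive. First, $z$ is central in $\alg M(\field)$, since $\alg M$ centralizes the image of $\lambda$. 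Second, conjugation by $z$ contracts $\alg U(\field)$: for every $u\in\alg U(\field)$ one has $z^{n}uz^{-n}\to 1$ in the $\field$-analytic topology as $n\to\infty$. The second point is the dynamical description of $\alg U=\alg U_{\alg G}(\lambda)$ evaluated at $\field$-points — the orbit map $t\mapsto\lambda(t)u\lambda(t)^{-1}$ extends to a morphism $\mathbb A^{1}\to\alg U$ taking $0$ to $1$, this map is continuous on $\field$-points, and $\varpi^{n}\to 0$ in $\field$.

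Now fix $v\in\pi$. By smoothness it is fixed by a compact open subgroup $\Gamma\subseteq\alg P(\field)$; write $\Gamma_{\alg M}:=\Gamma\cap\alg M(\field)$ and $\Gamma_{\alg U}:=\Gamma\cap\alg U(\field)$. Since $z$ centralizes $\alg M(\field)$, each $\pi(z^{n})v$ (for $n\in\Z$) is fixed by $z^{n}\Gamma_{\alg M}z^{-n}=\Gamma_{\alg M}$, so all these vectors lie in $\pi^{\Gamma_{\alg M}}$, which is finite-dimensional over $k$ because $\pi|_{\alg M(\field)}$ is admissible. For $N\in\Z$ set $W_{N}:=\sum_{m\le N}k\,\pi(z^{m})v\subseteq\pi^{\Gamma_{\alg M}}$. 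Then $W_{N}$ is finite-dimensional and stable under $\pi(z^{-1})$, which is injective on $\pi$; hence $\pi(z^{-1})$ restricts to an automorphism of $W_{N}$, so $W_{N}$ is $\pi(z)$-stable as well, and iterating $\pi(z)$ shows that $W_{N}$ contains $\pi(z^{m})v$ for every $m\in\Z$. Therefore $W_{N}$ is independent of $N$; call the common subspace $W$, and note $v\in W$.

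Finally, fix $u\in\alg U(\field)$. Since $z^{m}uz^{-m}\to 1$, there is $m_{0}$ with $z^{m}uz^{-m}\in\Gamma_{\alg U}\subseteq\Gamma$ for all $m\ge m_{0}$, and hence
\[
\pi(u)\,\pi(z^{-m})v=\pi(z^{-m})\,\pi(z^{m}uz^{-m})\,v=\pi(z^{-m})v\qquad(m\ge m_{0}).
\]
The vectors $\pi(z^{-m})v$ with $m\ge m_{0}$ span $W=W_{-m_{0}}$, so $\pi(u)$ fixes all of $W$, in particular $\pi(u)v=v$. As $v\in\pi$ and $u\in\alg U(\field)$ were arbitrary, $\alg U(\field)$ acts trivially on $\pi$. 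Aside from this linear-algebra core, the one step needing genuine care is the construction of $z$: one must know that parabolic $\field$-subgroups are defined by $\field$-rational cocharacters and that $\lambda(\varpi)$ then simultaneously centralizes $\alg M(\field)$ and contracts $\alg U(\field)$ for the $\field$-analytic topology. That is the only real obstacle, and it is a mild one.
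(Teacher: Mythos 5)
Your proof is correct and uses the same core idea as the paper's appendix proof: a central element of $\alg M(\field)$ --- your $z=\lambda(\varpi)$, the paper's strictly positive $a$ --- whose conjugates contract $\alg U(\field)$ into any prescribed compact open subgroup, combined with the finite-dimensionality coming from admissibility. The paper packages the admissibility input as the statement that admissible $\alg M(\field)$-representations are locally $Z$-finite and then picks a finite-dimensional $Z$-stable subspace $V$ containing $v$ together with an open subgroup $H$ fixing $V$ pointwise; you instead build the finite-dimensional $z$-stable space $W$ directly inside $\pi^{\Gamma_M}$ via the $\pi(z^{-1})$-stabilization argument, a slightly more self-contained route to the same object. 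The concluding commutation step, $\pi(u)\pi(z^{-m})v=\pi(z^{-m})\pi(z^{m}uz^{-m})v=\pi(z^{-m})v$ for $m\gg0$, is the same in both.
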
 

For the convenience of the reader we give an easy proof in an appendix. 

Let $I_1$ be a pro-$p$ Iwahori subgroup of $\alg G(\field)$ which is compatible
with $\alg P$. If $k$ has characteristic equal to the residue characteristic of
$\field$, then the functor $\pi\mapsto \pi^{I_1}$ from the category of smooth
representations of $\alg G(\field)$ to the category of modules over the
pro-$p$ Iwahori--Hecke $k$-algebra $H_k(I_1,\alg G(\field))$ of
$I_1$-biinvariant functions $\alg G(F)\to k$ is of central importance. In
\cite{Heyer.2020} the parabolic induction functor for Hecke modules, see
\cite{Ollivier-Vigneras.2018} or \cite{Vigneras.2015}, was realized by
constructing $k$-algebra homomorphisms
\[
\begin{tikzcd}
& H_k\bigl(I_1\cap \alg P(\field), \alg P(\field)\bigr) \ar[dl,"\Theta^P_M"']
\ar[dr,"\Xi^P_G"]\\
H_k\bigl(I_1\cap \alg M(\field), \alg M(\field)\bigr) & & H_k\bigl(I_1, \alg
G(\field)\bigr).
\end{tikzcd}
\]
It is then a natural question to ask whether Fact~\ref{fact} 
has a counterpart for $H_k\bigl(I_1\cap \alg P(\field), \alg
P(\field)\bigr)$-modules. More concretely, we ask: does every left
$H_k\bigl(I_1\cap \alg P(\field),\alg P(\field)\bigr)$-module arise from a
left $H_k\bigl(I_1\cap \alg M(\field),\alg M(\field)\bigr)$-module by
restricting along $\Theta^P_M$? The answer to the question as stated turns out
to be negative. In this note we characterize the $H_k\bigl(I_1\cap \alg
P(\field), \alg P(\field)\bigr)$-modules that arise by restriction along
$\Theta^P_M$. See Corollary~\ref{cor:modules} for the precise statement. In
fact, we prove (Theorem~\ref{thm:local}) that $H_k\bigl(I_1\cap \alg M(\field),
\alg M(\field)\bigr)$ is the localization of $H_k\bigl(I_1\cap \alg P(\field),
\alg P(\field)\bigr)$ at a certain element $T_a$ associated with a strictly
positive element $a$ of $\alg M(\field)$. 

If $\alg B_2$ is the Borel subgroup of upper triangular matrices in $\GL_2$,
and $\field = \Q_p$, the smooth modulo $p$ representations of $\alg B_2(\Q_p)$
and their relation with smooth $\GL_2(\Q_p)$-representations have
been studied in \cite{Berger.2010} and \cite{Vienney.2012}. In this light it may
be desirable to better understand $H_k\bigl(I_1\cap \alg B_2(\Q_p),\alg
B_2(\Q_p)\bigr)$-modules (maybe with $I_1$ replaced by some other compact open
subgroup). The results in this note are a first step in this direction.
\subsection{Acknowledgments} 
The results of this note were obtained in my doctoral dissertation
\cite{Heyer.2019} at the Humboldt University in Berlin. It is a pleasure to
thank my advisor Elmar Gro\ss{}e-Kl\"onne for asking the question above and for
his support. I also thank Peter Schneider for his interest in my work. During
the write-up of this note I was funded by the University of M\"unster and
Germany’s Excellence Strategy EXC 2044 390685587, Mathematics M\"unster:
Dynamics--Geometry--Structure.

\section{Reminder on abstract Hecke algebras}\label{sec:Hecke} 
We recall the notion of an abstract Hecke algebra. Details can be found in
\cite[Chapter~3, \S1.2]{Andrianov.1995}.
Let $G$ be a topological group and $S\subseteq G$ a submonoid containing
an open compact subgroup $\Gamma$. Then $(\Gamma g)g' \coloneqq (\Gamma gg')$
induces a right action of $S$ on the free $\Z$-module $\Z[\Gamma\backslash S]$
generated by the right cosets $(\Gamma g)$ of $\Gamma$ in $S$. Given $g\in S$,
we put
\[
T_g \coloneqq T_g^S \coloneqq \sum_{\Gamma h\subseteq \Gamma g\Gamma}(\Gamma
h),
\]
where the sum is finite and runs over all right cosets contained in $\Gamma
g\Gamma$. It is clear that $T_g$ depends only on the double coset $\Gamma
g\Gamma$. The following facts are well-known:
\begin{enumerate}[label=\roman*.]
\item\label{sec:Hecke-i} The $\Z$-module $H(\Gamma, S) \coloneqq
\Z[\Gamma\backslash S]^\Gamma$ of $\Gamma$-invariants is free with $\Z$-basis
$(T_g)_{\Gamma g\Gamma \in \Gamma\backslash S/G}$.
\item\label{sec:Hecke-ii} The multiplication $\sum_i a_i(\Gamma g_i)\cdot
\sum_jb_j(\Gamma h_j)\coloneqq \sum_{i,j} a_ib_j(\Gamma g_ih_j)$
turns $H(\Gamma, S)$ into an associative ring with unit $T_1$.
\end{enumerate}

Given a commutative ring $R$, we call
\[
H_R(\Gamma, S) \coloneqq R\otimes_\Z H(\Gamma, S)
\]
the \emph{Hecke algebra} over $R$ associated with $(\Gamma, S)$.
\section{The parabolic Hecke algebra}\label{sec:parabolic} 
Let $P = U\rtimes M$ be a semidirect product of locally profinite groups.
Let $\Gamma$ be a compact open subgroup of $P$ such that $\Gamma =
\Gamma_U\Gamma_M$, where $\Gamma_U \coloneqq \Gamma\cap U$ and $\Gamma_M
\coloneqq \Gamma\cap M$. We fix a commutative ring $R$.

\subsection{Positive elements}\label{subsec:positive} 
\begin{defn} 
\begin{enumerate}[label=(\alph*)]
\item An element $m\in M$ is called \emph{positive} if $m\Gamma_U m^{-1}
\subseteq \Gamma_U$. The positive elements form a monoid, denoted $M^+$. Note
that $\Gamma_M\subseteq M^+$.

\item A positive element $a$ in the center of $M$ is called \emph{strictly
positive} if $\bigcup_{n>0} a^{-n}\Gamma_Ua^n = U$. Note that the existence of
strictly positive elements necessitates that $U$ be the union of its compact
open subgroups.
\end{enumerate}
\end{defn} 

\begin{rmk}\label{rmk:parabolic} 
The setup one should have in mind is the following: let $\alg G$ be a connected
reductive group over a locally compact non-archimedean field $\field$. Let $\alg
P$ be a parabolic subgroup with unipotent radical $\alg U$ and Levi subgroup
$\alg M$. Denote $\ol{\alg P}$ the parabolic subgroup opposite to $\alg P$, \ie
the unique parabolic with Levi $\alg M$ satisfying $\ol{\alg P}\cap \alg P
= \alg M$. Let $I$ be a compact open subgroup of $\alg G(\field)$ which
decomposes as
\[
I = (I\cap \ol{\alg U}(\field))(I\cap \alg M(\field))(I\cap \alg U(\field)),
\] 
where $\ol{\alg U}$ is the unipotent radical of $\ol{\alg P}$. In this setting
positive elements were introduced and studied in
\cite[(6.5)]{Bushnell-Kutzko.1998} and \cite[II.4]{Vigneras.1998}. There, an
element $m\in \alg M(\field)$ is called positive if it is positive in our
sense\footnote{Here, $P = \alg P(\field)$, $M = \alg M(\field)$, $U = \alg
U(\field)$, and $\Gamma = P\cap I$.}
and if, in addition, $m^{-1}(I\cap \ol{\alg U}(\field))m \subseteq I\cap
\ol{\alg U}(\field)$. If $I$ is a (pro-$p$) Iwahori subgroup compatible with
$\alg P$ (meaning that $\alg P$ contains the minimal parabolic
subgroup determined by $I$), then it was proved in \cite[\S3.5]{Heyer.2020} that
both notions agree. Observe that our setting also allows for $I$ to be a
certain maximal compact or special parahoric subgroup of $G$.

We also remark that our definition of ``strictly positive'' is nonstandard and
more general than the definitions in \cite[(6.16)]{Bushnell-Kutzko.1998} (who
use the term ``strongly positive'') and \cite[II.4]{Vigneras.1998}.
\end{rmk} 

The following trivial lemma explains the role of strictly positive elements.
\begin{lem}\label{lem:strictly positive} 
Let $a\in M$ be a strictly positive element. For every $m\in M$ there exists
$n\in \Z_{>0}$ such that $a^nm = ma^n \in M^+$.
\end{lem}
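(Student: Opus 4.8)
The plan is to reduce the statement to the observation that $m\Gamma_U m^{-1}$ is a compact subset of $U$ which is exhausted by the \emph{nested} family $(a^{-n}\Gamma_U a^n)_{n>0}$. Since $a$ lies in the center of $M$, the identity $a^n m = m a^n$ holds for every $n\in\Z_{>0}$, so the only thing left to establish is that $a^n m$ is positive once $n$ is large enough, \ie that $(a^n m)\Gamma_U(a^n m)^{-1}\subseteq \Gamma_U$.

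First I would record that the family $(a^{-n}\Gamma_U a^n)_{n\ge 0}$ is increasing: positivity of $a$ gives $a\Gamma_U a^{-1}\subseteq \Gamma_U$, hence $\Gamma_U\subseteq a^{-1}\Gamma_U a$, and conjugating by $a^{-n}$ yields $a^{-n}\Gamma_U a^n\subseteq a^{-(n+1)}\Gamma_U a^{n+1}$. As $a$ is strictly positive, $U=\bigcup_{n>0}a^{-n}\Gamma_U a^n$ is therefore a directed union of compact open subgroups of $U$.

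Next, since $M$ normalizes $U$, conjugation by $m$ is a homeomorphism of $U$, so $m\Gamma_U m^{-1}$ is again a compact (open) subgroup of $U$. Being compact, it is covered by finitely many members of the directed family above, hence contained in a single one: there is $n\in\Z_{>0}$ with $m\Gamma_U m^{-1}\subseteq a^{-n}\Gamma_U a^n$. Conjugating this inclusion by $a^n$ gives
\[
(a^n m)\Gamma_U (a^n m)^{-1} = a^n\bigl(m\Gamma_U m^{-1}\bigr)a^{-n}\subseteq a^n\bigl(a^{-n}\Gamma_U a^n\bigr)a^{-n}=\Gamma_U,
\]
so $a^n m = m a^n\in M^+$, as desired.

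There is essentially no obstacle here, in keeping with the lemma being labelled trivial; the only point that goes beyond formal manipulation is the passage from the pointwise exhaustion of $U$ to the containment of the compact set $m\Gamma_U m^{-1}$ inside a single $a^{-n}\Gamma_U a^n$, which is immediate once one knows that the exhausting family is nested.
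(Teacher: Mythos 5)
Your proof is correct and follows the same approach as the paper: use compactness of $m\Gamma_U m^{-1}$ together with the exhaustion $U=\bigcup_{n>0}a^{-n}\Gamma_U a^n$ to find $n$ with $m\Gamma_U m^{-1}\subseteq a^{-n}\Gamma_U a^n$, then conjugate by $a^n$. You simply spell out a point the paper leaves implicit, namely that the exhausting family is nested (so compactness yields containment in a single member rather than merely a finite cover).
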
 
\begin{proof} 
Since $m\Gamma_Um^{-1}$ is compact and $a$ is strictly positive, there exists
$n>0$ such that $m\Gamma_Um^{-1} \subseteq a^{-n}\Gamma_Ua^n$. This means
$a^nm\in M^+$.
\end{proof} 
\subsection{The centralizer of a strictly positive 
element}\label{subsec:centralizer}
Consider the Hecke algebra
\[
\Hecke_R(P)\coloneqq H_R(\Gamma, P).
\]
Let $a\in M$ be a strictly positive element and define the centralizer
$R$-algebra
\[
C_{\Hecke_R(P)}(a) \coloneqq \set{X\in \Hecke_R(P)}{XT_a = T_aX}.
\]
If $P$ is the Siegel parabolic of a symplectic group the algebra
$C_{\Hecke_R(P)}(a)$ was already studied by Andrianov. The results of this
paragraph have their origins in \cite[\S4]{Andrianov.1977}.

\begin{lem}\label{lem:C(a)} 
\begin{enumerate}[label=(\alph*)]
\item\label{lem:C(a)-a} One has $C_{\Hecke_R(P)}(a) = \set{X\in \Hecke_R(P)}{X
= \sum_{i=1}^r\lambda_i\cdot (\Gamma m_i), \text{with $m_i\in M$}}$.
\item\label{lem:C(a)-b} The $R$-algebra $C_{\Hecke_R(P)}(a)$ is a free
$R$-module with basis $\mathcal B \coloneqq \set{T_m \in \Hecke_R(P)}{m\in
M^+}$.
\item\label{lem:C(a)-c} Given $X\in \Hecke_R(P)$, there exists $n>0$ such
that $T_a^nX \in C_{\Hecke_R(P)}(a)$.
\end{enumerate}
\end{lem}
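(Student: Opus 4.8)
The plan is to first pin down $T_a$ explicitly, then run a short bootstrap: the easy inclusion in (a) is a one-line computation, part (c) follows from strict positivity plus finiteness of supports, the hard inclusion in (a) is extracted from (c), and (b) is then formal. Before anything else I would show $\Gamma a\Gamma=\Gamma a$, so that $T_a=(\Gamma a)$ is a \emph{single} right coset, and record $a\Gamma a^{-1}\subseteq\Gamma\subseteq a^{-1}\Gamma a$; this is a direct manipulation of $\Gamma=\Gamma_U\rtimes\Gamma_M$ using positivity ($a\Gamma_Ua^{-1}\subseteq\Gamma_U$), centrality of $a$ in $M$ ($a\Gamma_Ma^{-1}=\Gamma_M$), and the fact that $\Gamma_M$ normalizes $\Gamma_U$. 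Two consequences are needed: (i) because $a\Gamma a^{-1}\subseteq\Gamma$, the multiplication formula in $\Hecke_R(P)$ specializes to $(\Gamma a)\cdot\sum_g\lambda_g(\Gamma g)=\sum_g\lambda_g(\Gamma ag)$ and $\bigl(\sum_g\lambda_g(\Gamma g)\bigr)\cdot(\Gamma a)=\sum_g\lambda_g(\Gamma ga)$, where $\Gamma g\mapsto\Gamma ga$ is a \emph{bijection} of $\Gamma\backslash P$ (no collapsing); (ii) writing $g=um$ with $u\in U$, $m\in M$ for the $U\rtimes M$-decomposition, the coset $\Gamma g$ has a representative in $M$ iff $u\in\Gamma_U$, and since $ga^n=u\,(ma^n)$ with $ma^n=a^nm\in M$, the element $u$ is also the $U$-part of $ga^n$, so $\Gamma g$ has a representative in $M$ iff $\Gamma ga^n$ does, for every $n\ge 0$.

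Next I would dispose of the easy inclusion in (a) and set up (b). If $X=\sum_i\lambda_i(\Gamma m_i)$ with all $m_i\in M$, then $XT_a=\sum_i\lambda_i(\Gamma m_ia)=\sum_i\lambda_i(\Gamma am_i)=T_aX$ by centrality, so $X\in C_{\Hecke_R(P)}(a)$. In particular, for $m\in M^+$ one checks $\Gamma m\Gamma=\Gamma m\Gamma_M$ (from $m\Gamma_Um^{-1}\subseteq\Gamma_U$ and $\Gamma_M$ normalizing $\Gamma_U$), so the right cosets making up $T_m$ are the $\Gamma m\gamma_M$ with $\gamma_M\in\Gamma_M$, each having a representative in $M$; hence $T_m\in C_{\Hecke_R(P)}(a)$. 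Since distinct double cosets have disjoint supports, $\mathcal B=\set{T_m}{m\in M^+}$ is $R$-linearly independent.

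For (c): any $X\in\Hecke_R(P)$ has finite support, say the right cosets $\Gamma h_k=\Gamma u_km_k$ ($u_k\in U$, $m_k\in M$). Strict positivity gives $U=\bigcup_{n>0}a^{-n}\Gamma_Ua^n$ and positivity gives $a^j\Gamma_Ua^{-j}\subseteq\Gamma_U$ for $j\ge 0$, so there is a \emph{single} $n>0$ with $a^nu_ka^{-n}\in\Gamma_U$ for all $k$; then $\Gamma a^nh_k=\Gamma(a^nu_ka^{-n})(m_ka^n)=\Gamma\,m_ka^n$ has the representative $m_ka^n\in M$, whence $T_a^nX=(\Gamma a^n)\cdot X$ is a sum of cosets with representatives in $M$ and so lies in $C_{\Hecke_R(P)}(a)$ by the easy inclusion. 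This gives (c). For the hard inclusion in (a), take $X\in C_{\Hecke_R(P)}(a)$ and apply this very computation to $X$ itself: for $n$ large enough, $T_a^nX$ is supported on cosets with a representative in $M$. Since $X$ centralizes $T_a$, $T_a^nX=XT_a^n=\sum_g\lambda_g(\Gamma ga^n)$, and $\Gamma g\mapsto\Gamma ga^n$ is a bijection carrying the support of $X$ \emph{onto} that of $T_a^nX$; by (ii), $\Gamma ga^n$ has a representative in $M$ iff $\Gamma g$ does, so the support of $X$ already consists of cosets with representatives in $M$, i.e. $X=\sum_i\lambda_i(\Gamma m_i)$ with $m_i\in M$. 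This finishes (a).

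Finally (b): for such $X$, being $\Gamma$-invariant its support is a union of right $\Gamma$-orbits, and the orbit of $\Gamma m$ consists entirely of cosets with a representative in $M$ exactly when $m\Gamma_Um^{-1}\subseteq\Gamma_U$, i.e. $m\in M^+$, in which case that orbit is precisely the support of $T_m$; hence $X$ is an $R$-linear combination of the $T_m$ with $m\in M^+$, and with the linear independence from above, $\mathcal B$ is an $R$-basis of $C_{\Hecke_R(P)}(a)$. The one genuinely delicate point is the hard inclusion in (a): one must ``cancel'' $T_a^n$, which is legitimate only because $X\in C_{\Hecke_R(P)}(a)$ allows $T_a^n$ to be moved to the other side, where right multiplication by $a^n$ is an honest bijection of cosets visibly preserving the property of having a representative in $M$; everything else is bookkeeping with the semidirect-product decomposition and the definitions of positive and strictly positive.
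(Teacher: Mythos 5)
Your proof is correct and follows essentially the same path as the paper: establish $T_a=(\Gamma a)$, verify the easy inclusion of (a), prove (c) by choosing $n$ so large that $a^n\Gamma_U'a^{-n}\subseteq\Gamma_U$ for the relevant compact set, extract the hard inclusion of (a) by cancelling $T_a^n$ on the right (using centrality of $a$ and that right translation by $a^n$ is a bijection of $\Gamma\backslash P$ preserving the property of having an $M$-representative), and deduce (b) from $\Gamma$-invariance. The only cosmetic difference is in the last step of (b): the paper shows $m_i\in M^+$ directly via a permutation $\sigma_u$ of the support and a projection to $U$, whereas you phrase the same $\Gamma$-invariance argument as the support being a union of $\Gamma$-orbits each forced to consist of $M$-representable cosets.
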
 
\begin{proof} 
Given $m\in M^+$, we have $\Gamma m\Gamma = \Gamma m\Gamma_M$, since $\Gamma =
\Gamma_U\Gamma_M$ and $m\Gamma_Um^{-1} \subseteq \Gamma_U$. Since $a$ is central
and positive, we deduce $\Gamma a\Gamma = \Gamma a$, whence $T_a = (\Gamma a)$
in $\Hecke_R(P)$. Clearly, every $X \in \Hecke_R(P)$ of the form $X =
\sum_{i=1}^r \lambda_i\cdot (\Gamma m_i)$, with $m_i\in M$, lies in
$C_{\Hecke_R(P)}(a)$. Let $X = \sum_{i=1}^r \lambda_i\cdot (\Gamma u_im_i) \in
\Hecke_R(P)$, where $u_i\in U$, $m_i\in M$. As $a$ is strictly positive, there
exists $n>0$ such that $u_1,\dotsc,u_r\in a^{-n}\Gamma_Ua^n$. Therefore,
\[
T_a^nX = \sum_{i=1}^r \lambda_i\cdot (\Gamma a^nu_ia^{-n}\cdot a^nm_i) =
\sum_{i=1}^r\lambda_i\cdot (\Gamma a^nm_i) \in C_{\Hecke_R(P)}(a).
\]
This proves \ref{lem:C(a)-c}. If $X$ as above already lies in
$C_{\Hecke_R(P)}(a)$, then, using the right action of $P$ on $R[\Gamma\backslash
P]$, the same computation shows $X\cdot a^n = XT_a^n = T_a^nX = \sum_{i=1}^r
\lambda_i\cdot (\Gamma m_i)\cdot a^n$. This proves \ref{lem:C(a)-a}. Finally,
let $C$ be the free $R$-module generated by $\mathcal B$. From \ref{lem:C(a)-a}
it follows that $C\subseteq C_{\Hecke_R(P)}(a)$. Conversely, let $X\in
C_{\Hecke_R(P)}(a)$ and write $X = \sum_{i=1}^r \lambda_i\cdot (\Gamma m_i)$,
with $m_i\in M$. It remains to show $m_1,\dotsc,m_r\in M^+$. Let $u\in \Gamma_U$
be arbitrary. Since $X$ is $\Gamma$-invariant, there exists a permutation
$\sigma_u$ on $\{1,\dotsc,r\}$ such that $\Gamma m_iu = \Gamma m_{\sigma_u(i)}$.
Hence, there exists $y\in \Gamma$ with $m_ium_i^{-1}\cdot m_i = m_iu =
ym_{\sigma_u(i)}$. Writing $y = y_Uy_M$, where $y_U\in \Gamma_U$ and $y_M\in
\Gamma_M$, and projecting to $U$ yields $m_ium_i^{-1} = y_U\in \Gamma_U$. As $u$
was arbitrary, this shows that $m_i$ is positive. Now, $X\in C$ follows from
\ref{sec:Hecke-i} in section~\ref{sec:Hecke}.
\end{proof} 

Put $\Hecke_R(M) \coloneqq H_R(\Gamma_M,M)$. In \cite{Heyer.2020} the
$R$-algebra homomorphism
\[
\Theta^P_M\colon \Hecke_R(P) \longrightarrow \Hecke_R(M),\qquad
\sum_{i=1}^r\lambda_i\cdot (\Gamma u_im_i) \longmapsto \sum_{i=1}^r
\lambda_i\cdot (\Gamma_Mm_i)
\]
was studied. Recall that $\Hecke_R(M)$ is a localization of the subalgebra
$\Hecke_R(M^+)\coloneqq H_R(\Gamma_M,M^+)$ at the element $T_a^{M^+}$
\cite[II.6]{Vigneras.1998}. 

\begin{cor}\label{cor:C(a)} 
The map $\Theta^P_M$ induces by restriction an isomorphism $C_{\Hecke_R(P)}(a)
\cong \Hecke_R(M^+)$.
\end{cor}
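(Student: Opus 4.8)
The plan is to combine the explicit $R$-basis of $C_{\Hecke_R(P)}(a)$ from Lemma~\ref{lem:C(a)}\ref{lem:C(a)-b} with a term-by-term computation of $\Theta^P_M$ on that basis. The heart of the matter is the identity $\Theta^P_M(T_m) = T_m^{M^+}$ for every $m\in M^+$: granting this, $\Theta^P_M$ carries the $R$-basis $\mathcal B = \set{T_m\in\Hecke_R(P)}{m\in M^+}$ of $C_{\Hecke_R(P)}(a)$ into $\Hecke_R(M^+)$, so the restriction $C_{\Hecke_R(P)}(a)\to\Hecke_R(M^+)$ is well-defined, and it then remains only to check that it maps $\mathcal B$ bijectively onto the standard $R$-basis of $\Hecke_R(M^+)$ (see~\ref{sec:Hecke-i}).

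To obtain the identity, recall from the proof of Lemma~\ref{lem:C(a)} that positivity of $m$ yields $\Gamma m\Gamma = \Gamma m\Gamma_M$. For $\gamma_1,\gamma_2\in\Gamma_M$ one has $\Gamma m\gamma_1 = \Gamma m\gamma_2$ if and only if $\gamma_1\gamma_2^{-1}\in m^{-1}\Gamma m\cap\Gamma_M$, so the delicate point is the equality
\[
m^{-1}\Gamma m\cap\Gamma_M = m^{-1}\Gamma_M m\cap\Gamma_M.
\]
Indeed, an element of the left-hand side can be written as $u'm'$ with $u'\in m^{-1}\Gamma_U m\subseteq U$ and $m'\in m^{-1}\Gamma_M m\subseteq M$, and since it lies in $M$, projecting to $U$ in $P = U\rtimes M$ forces $u' = 1$; the reverse inclusion is obvious. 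Hence one common set of representatives $(\gamma_j)_j$ of $(m^{-1}\Gamma_M m\cap\Gamma_M)\backslash\Gamma_M$ furnishes simultaneously $\Gamma m\Gamma_M = \bigsqcup_j\Gamma m\gamma_j$ and $\Gamma_M m\Gamma_M = \bigsqcup_j\Gamma_M m\gamma_j$, and applying $\Theta^P_M$ to $T_m = \sum_j(\Gamma m\gamma_j)$ term by term gives $\sum_j(\Gamma_M m\gamma_j) = T_m^{M^+}$, as required.

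Finally, I would verify that $\Gamma m\Gamma\mapsto\Gamma_M m\Gamma_M$ is a bijection from the double cosets $\Gamma m\Gamma$ with $m\in M^+$ onto the double cosets $\Gamma_M m\Gamma_M$ with $m\in M^+$. Well-definedness and surjectivity are clear because $\Gamma_M\subseteq\Gamma$. For injectivity, if $m,m'\in M^+$ satisfy $\Gamma m\Gamma = \Gamma m'\Gamma = \Gamma m\Gamma_M$, write $m' = \gamma_U\gamma_M m\gamma_M'$ with $\gamma_U\in\Gamma_U$ and $\gamma_M,\gamma_M'\in\Gamma_M$; since $\gamma_M m\gamma_M'\in M$ and $m'\in M$, projecting to $U$ gives $\gamma_U = 1$, hence $m'\in\Gamma_M m\Gamma_M$. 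Thus $\Theta^P_M$ sends $\mathcal B$ bijectively onto the $R$-basis of $\Hecke_R(M^+)$, so its restriction $C_{\Hecke_R(P)}(a)\to\Hecke_R(M^+)$ is an isomorphism of $R$-modules; being the restriction of the $R$-algebra homomorphism $\Theta^P_M$ to the subalgebra $C_{\Hecke_R(P)}(a)$, it is an isomorphism of $R$-algebras. The only step requiring any real care is the identity $m^{-1}\Gamma m\cap\Gamma_M = m^{-1}\Gamma_M m\cap\Gamma_M$ (equivalently, the compatibility of the two coset decompositions under $\Theta^P_M$), and even that reduces at once to $\Gamma = \Gamma_U\Gamma_M$ and the semidirect-product structure; since Lemma~\ref{lem:C(a)} already carries the bulk of the content, I do not anticipate any genuine obstacle.
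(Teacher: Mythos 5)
Your proof is correct and fills in, with careful verification of the coset identity $m^{-1}\Gamma m\cap\Gamma_M = m^{-1}\Gamma_M m\cap\Gamma_M$ and the double-coset bijection, exactly what the paper's one-line proof asserts: that $\Theta^P_M$ identifies the basis $\mathcal B$ of $C_{\Hecke_R(P)}(a)$ with the double-coset basis of $\Hecke_R(M^+)$. Same approach, just made explicit.
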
 
\begin{proof} 
$\Theta^P_M$ identifies the basis $\mathcal B$ in
Lemma~\ref{lem:C(a)} \ref{lem:C(a)-b} with the double coset basis of
$\Hecke_R(M^+)$.
\end{proof} 
\subsection{Localization}\label{subsec:local} 
Fix a strictly positive element $a$ of $M$. 
\begin{defn} 
Let $\ideal m$ be a left $\Hecke_R(P)$-module. Define the $R$-submodule
\[
\Rad_a(\ideal m) \coloneqq \set{x\in \ideal m}{T_a^nx = 0\text{ for some
$n>0$}}.
\]
\end{defn} 

\begin{lem}\label{lem:Rad} 
Let $\ideal m$ be a left $\Hecke_R(P)$-module.
\begin{enumerate}[label=(\alph*)]
\item\label{lem:Rad-a} The $R$-submodule $\Rad_a(\ideal m)$ does not depend on
the choice of $a$. We therefore omit the subscript and write simply $\Rad(\ideal
m)$.

\item\label{lem:Rad-b} $\Rad(\ideal m)$ is an $\Hecke_R(P)$-submodule of $\ideal
m$.
\end{enumerate}
\end{lem}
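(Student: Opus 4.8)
The plan is to prove part~\ref{lem:Rad-a} first, by a direct double inclusion that uses the fact that the Hecke operator attached to a positive central element of $M$ is supported on a single right coset of $\Gamma$. Let $a$ and $b$ be strictly positive elements of $M$; I will show $\Rad_b(\ideal m)\subseteq\Rad_a(\ideal m)$, the reverse inclusion following by interchanging $a$ and $b$. Applying Lemma~\ref{lem:strictly positive} to the strictly positive element $a$ and the element $b^{-1}\in M$ produces an integer $k>0$ with $c\coloneqq a^kb^{-1}=b^{-1}a^k\in M^+$; moreover $c$ is central in $M$ since $a$ and $b$ are. As in the proof of Lemma~\ref{lem:C(a)}, a positive central element $g$ of $M$ satisfies $\Gamma g\Gamma=\Gamma g$, hence $T_g=(\Gamma g)$ in $\Hecke_R(P)$. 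Using this for $g=b$, $g=c$ and $g=a$, the multiplication rule $(\Gamma x)(\Gamma y)=(\Gamma xy)$ on $\Gamma$-invariant elements, and the group identities $cb=bc=a^k$, one gets
\[
T_cT_b=(\Gamma c)(\Gamma b)=(\Gamma cb)=(\Gamma a)^k=T_a^k=(\Gamma b)(\Gamma c)=T_bT_c,
\]
so $T_b$ and $T_c$ commute in $\Hecke_R(P)$. Therefore, if $x\in\ideal m$ satisfies $T_b^nx=0$ for some $n>0$, then $T_a^{kn}x=(T_cT_b)^nx=T_c^nT_b^nx=0$, i.e.\ $x\in\Rad_a(\ideal m)$. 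Interchanging $a$ and $b$ yields the opposite inclusion, which proves part~\ref{lem:Rad-a}; one then writes $\Rad(\ideal m)$ for the common value.

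For part~\ref{lem:Rad-b}, fix any strictly positive $a$. The subset $\Rad(\ideal m)=\Rad_a(\ideal m)$ is plainly an $R$-submodule: it contains $0$, and if $T_a^nx=0$ and $T_a^my=0$ then $T_a^{\max(n,m)}(x+y)=0$. For stability under the $\Hecke_R(P)$-action, take $X\in\Hecke_R(P)$ and $x\in\Rad(\ideal m)$ with $T_a^nx=0$. By Lemma~\ref{lem:C(a)} \ref{lem:C(a)-c} there is $l>0$ with $T_a^lX\in C_{\Hecke_R(P)}(a)$; in particular $T_a^lX$ commutes with $T_a$, hence with $T_a^n$. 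Then
\[
T_a^{n+l}(Xx)=T_a^n\bigl(T_a^lX\bigr)x=\bigl(T_a^lX\bigr)T_a^nx=\bigl(T_a^lX\bigr)\cdot 0=0,
\]
so $Xx\in\Rad(\ideal m)$, as needed.

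The only place where something has to be discovered is part~\ref{lem:Rad-a}: the auxiliary element $c=a^kb^{-1}$, made positive by Lemma~\ref{lem:strictly positive}, plays the role of an inverse of $T_b$ modulo a power of $T_a$, and this is exactly what converts a $T_b$-torsion statement into a $T_a$-torsion statement. Everything else is bookkeeping, but two small points should be checked explicitly: that $c$, as well as $b$ and $a^k$, is positive and central, so that its Hecke operator really is the single coset $(\Gamma\,\cdot\,)$ and the identity $(\Gamma x)(\Gamma y)=(\Gamma xy)$ applies; and that the step $(T_cT_b)^n=T_c^nT_b^n$ is legitimate precisely because $T_b$ and $T_c$ commute. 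Part~\ref{lem:Rad-b} is then immediate from Lemma~\ref{lem:C(a)} \ref{lem:C(a)-c}, which already encodes that $\Hecke_R(P)$ becomes commutative enough after multiplication by a sufficiently high power of $T_a$. I do not expect a genuine obstacle here.
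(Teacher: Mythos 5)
Your proof is correct and follows essentially the same route as the paper: both use Lemma~\ref{lem:strictly positive} to manufacture a positive central element whose single-coset Hecke operator kills the difference between $T_a$-torsion and $T_b$-torsion, and both derive part~\ref{lem:Rad-b} from Lemma~\ref{lem:C(a)} \ref{lem:C(a)-c}. The only cosmetic difference is in (a): the paper chooses $l$ (depending on the given $n$) so that $b^l a^{-n}$ is positive and multiplies once, whereas you choose $k$ once (independent of $n$) so that $a^k b^{-1}$ is positive and then raise to the $n$-th power using commutativity; both are valid and equally short.
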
 
\begin{proof} 
Let $b$ be another strictly positive element of $M$. By symmetry it suffices to
show $\Rad_a(\ideal m) \subseteq \Rad_b(\ideal m)$. So let $x\in \Rad_a(\ideal
m)$ and $n>0$ with $T_a^nx = 0$. By Lemma~\ref{lem:strictly positive}, and
because $b$ is strictly positive, there exists $l>0$ such that $m\coloneqq
b^la^{-n}$ is positive. Note also that $m$ lies in the center of $M$. Therefore,
$T_m = (\Gamma m)$ (see the proof of Lemma~\ref{lem:C(a)}) and $T_b^lx =
T_mT_a^nx = 0$. Whence $x\in \Rad_b(\ideal m)$. This proves \ref{lem:Rad-a},
and \ref{lem:Rad-b} is an immediate consequence of
Lemma~\ref{lem:C(a)} \ref{lem:C(a)-c}.
\end{proof} 

Consider the multiplicatively closed set $\mathcal S_a \coloneqq \set{T_a^n \in
\Hecke_R(P)}{n\in\Z_{\ge0}}$ in $\Hecke_R(P)$. 

\begin{thm}\label{thm:local} 
The map $\Theta^P_M\colon \Hecke_R(P)\to \Hecke_R(M)$ exhibits $\Hecke_R(M)$ as
the left ring of fractions of $\Hecke_R(P)$ with respect to $\mathcal S_a$.
\end{thm}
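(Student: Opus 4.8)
The plan is to verify Ore's criterion for $\mathcal S_a$ inside $\Hecke_R(P)$ and then identify the resulting ring of fractions with $\Hecke_R(M)$ via $\Theta^P_M$. First I would check the left Ore condition: given $X\in\Hecke_R(P)$ and $T_a^n\in\mathcal S_a$, I need $Y\in\Hecke_R(P)$ and $T_a^m\in\mathcal S_a$ with $YT_a^n=T_a^mX$. By Lemma~\ref{lem:C(a)}~\ref{lem:C(a)-c} there is $l>0$ with $T_a^lX\in C_{\Hecke_R(P)}(a)$, and since $T_a$ is central in its own centralizer this element commutes with $T_a^n$; so $Y\coloneqq T_a^{l}X$ and $T_a^m\coloneqq T_a^{l}$ (observing $T_a^lX\cdot T_a^n = T_a^n\cdot T_a^lX$) does the job after a small bookkeeping of exponents. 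Second, I would check the reversibility condition: if $T_a^nX=0$ for some $n$, i.e. $X\in\Rad(\Hecke_R(P))$ viewed as a module over itself, I must produce $T_a^m$ with $XT_a^m=0$; again push $X$ into $C_{\Hecke_R(P)}(a)$ by multiplying on the left by a suitable $T_a^l$, use that $\Theta^P_M$ restricted to $C_{\Hecke_R(P)}(a)\cong\Hecke_R(M^+)$ is injective (Corollary~\ref{cor:C(a)}) and that $T_a^{M^+}$ is a non-zero-divisor-up-to-torsion there, or more directly use the right action of $P$ on $R[\Gamma\backslash P]$ as in the proof of Lemma~\ref{lem:C(a)} to transfer the vanishing $T_a^{n+l}X=0$ into $X T_a^{\bullet}=0$.

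Once Ore's conditions hold, the universal left ring of fractions $\Hecke_R(P)[\mathcal S_a^{-1}]$ exists. To show $\Theta^P_M$ realizes it, I would check the three defining properties: (i) $\Theta^P_M$ sends each $T_a^n$ to a unit of $\Hecke_R(M)$ — indeed $\Theta^P_M(T_a)=T_a^{M^+}$, which is invertible in $\Hecke_R(M)$ since $\Hecke_R(M)$ is the localization of $\Hecke_R(M^+)$ at $T_a^{M^+}$ as recalled after Corollary~\ref{cor:C(a)}; (ii) every element of $\Hecke_R(M)$ has the form $\Theta^P_M(T_a^n)^{-1}\Theta^P_M(X)$ for some $X\in\Hecke_R(P)$, $n\ge0$ — this follows because a general element of $\Hecke_R(M)$ is $(T_a^{M^+})^{-n}\cdot T$ with $T\in\Hecke_R(M^+)$, and $T=\Theta^P_M(X)$ for a suitable $X$ (lift the double-coset basis $T_m$, $m\in M^+$, via $T_m^P\mapsto T_m^{M^+}$), while $(T_a^{M^+})^{-n}=\Theta^P_M(T_a^n)^{-1}$; (iii) $\Theta^P_M(X)=0$ iff $T_a^nX\in\Ker\Theta^P_M$ becomes invertible-killed, i.e. iff $T_a^nX=0$ for some $n$ — for this I would compute $\Ker\Theta^P_M$ explicitly. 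The kernel is spanned by elements $(\Gamma u m)-(\Gamma m)$ with $u\in U$, $m\in M$, and for each such element strict positivity of $a$ gives $n$ with $u\in a^{-n}\Gamma_U a^n$, whence $T_a^n\cdot\bigl((\Gamma um)-(\Gamma m)\bigr)=(\Gamma a^n m)-(\Gamma a^n m)=0$; conversely if $T_a^nX=0$ then $\Theta^P_M(X)=\Theta^P_M(T_a)^{-n}\Theta^P_M(T_a^nX)=0$. These three points are exactly the characterization of a left ring of fractions, so the theorem follows.

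The main obstacle I anticipate is pinning down $\Ker\Theta^P_M$ precisely and proving that it equals $\set{X}{T_a^nX=0 \text{ for some }n>0}=\Rad(\Hecke_R(P))$ — the inclusion $\supseteq$ is the easy direction just sketched, but the inclusion $\subseteq$ (that everything in the kernel is $T_a$-torsion) requires showing that the spanning set $\{(\Gamma um)-(\Gamma m)\}$ really generates the kernel as a left ideal, or at least as an $R$-submodule, which needs a careful comparison of the $R$-bases: the double cosets $\Gamma g\Gamma$ in $\Gamma\backslash P/P$ versus $\Gamma_M m\Gamma_M$ in $M$, using the decomposition $\Gamma=\Gamma_U\Gamma_M$ and the fact that for $u\in U$, $m\in M$ the coset $\Gamma u m$ depends on $m$ only through $\Gamma_M m \Gamma_M$ in a way that must be made bijective after applying some $T_a^n$. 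This is essentially a combinatorial/measure-theoretic statement about how $\Gamma$-double cosets in $P$ fiber over $\Gamma_M$-double cosets in $M$, and getting it cleanly is where the real work lies; the Ore conditions themselves are then a formal consequence of Lemma~\ref{lem:C(a)}~\ref{lem:C(a)-c} together with this kernel description.
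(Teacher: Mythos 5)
Your high-level strategy is correct and matches the paper's: both proofs go through Lam's characterization of a left ring of fractions (the three conditions you label (i), (ii), (iii)). But your handling of condition (iii) has a genuine gap, and you correctly identify where it is. The spanning set $\{(\Gamma um)-(\Gamma m)\}$ you propose for $\Ker\Theta^P_M$ lives in $R[\Gamma\backslash P]$ but not in $\Hecke_R(P)=R[\Gamma\backslash P]^\Gamma$: the individual differences $(\Gamma um)-(\Gamma m)$ are not $\Gamma$-invariant, so you cannot simply say the kernel is spanned by them and kill each one separately with a power of $T_a$. Passing from the $R$-linear kernel in $R[\Gamma\backslash P]$ to its $\Gamma$-invariant part is precisely the ``careful comparison of bases'' you flag, and it is not routine. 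The paper sidesteps this entirely by combining two results you already cite for other purposes: given $X\in\Ker\Theta^P_M$, Lemma~\ref{lem:C(a)}\ref{lem:C(a)-c} gives $n$ with $T_a^nX\in C_{\Hecke_R(P)}(a)$; since $\Theta^P_M(T_a^nX)=T^M_{a^n}\Theta^P_M(X)=0$ and $\Theta^P_M$ is injective on $C_{\Hecke_R(P)}(a)$ by Corollary~\ref{cor:C(a)}, one gets $T_a^nX=0$ at once. No explicit description of the kernel is needed.

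Two smaller points. First, the direct verification of the left Ore and reversibility conditions is superfluous: Lam's~(10.1) is already a complete characterization of the left ring of fractions, and its existence automatically implies that $\mathcal S_a$ is a left denominator set, so the paper does not check Ore at all. Second, your reversibility check is itself shaky as written: from $T_a^nX=0$ and $T_a^lX\in C_{\Hecke_R(P)}(a)$ you can conclude $(T_a^lX)T_a^n=T_a^n(T_a^lX)=0$, i.e.\ $T_a^l(XT_a^n)=0$, but without knowing that $T_a^l$ is a non-zero-divisor this does not give $XT_a^m=0$. It is another reason to drop the Ore bookkeeping and go straight to Lam's conditions as the paper does.
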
 
\begin{proof} 
According to \cite[(10.1) Definition]{Lam.1999} we need to check the following
conditions:
\begin{enumerate}[label=(\roman*)]
\item\label{thm:local-i} $\Theta^P_M$ is $\mathcal S_a$-inverting, meaning that
$\Theta^P_M(s)$ is invertible in $\Hecke_R(M)$, for every $s\in \mathcal S_a$.
\item\label{thm:local-ii} Every element of $\Hecke_R(M)$ has the form
$\Theta^P_M(s)^{-1}\Theta^P_M(X)$, for some $X\in \Hecke_R(P)$ and $s\in
\mathcal S_a$.
\item\label{thm:local-iii} $\Ker\Theta^P_M = \Rad(\Hecke_R(P))$.
\end{enumerate}
For every $n\ge0$ the element $\Theta^P_M(T_a^n) = \Theta^P_M(T_{a^n}) =
T^M_{a^n}$ is invertible in $\Hecke_R(M)$ with inverse $T^M_{a^{-n}}$, whence
\ref{thm:local-i}. In particular, $\Rad(\Hecke_R(P))$ is contained in
$\Ker\Theta^P_M$. Let $X\in \Ker\Theta^P_M$ be arbitrary. By
Lemma~\ref{lem:C(a)} \ref{lem:C(a)-c} there exists $n>0$ with $T_a^nX \in
C_{\Hecke_R(P)}(a)$. By Corollary~\ref{cor:C(a)} the restriction of
$\Theta^P_M$ to $C_{\Hecke_R(P)}(a)$ is injective. Hence $T_a^nX = 0$, that
is, $X\in \Rad(\Hecke_R(P))$. This shows \ref{thm:local-iii}. Now,
\ref{thm:local-ii} follows from the fact that the image of $\Theta^P_M$ contains
$\Hecke_R(M^+)$ (Corollary~\ref{cor:C(a)}) and that $\Hecke_R(M)$ is a
localization of $\Hecke_R(M^+)$ at $\Theta^P_M(T_a)$
\cite[II.6]{Vigneras.1998}.
\end{proof} 

\begin{cor}\label{cor:modules} 
Let $\ideal m$ be a left $\Hecke_R(P)$-module. Then $\ideal m$ comes from an
$\Hecke_R(M)$-module by restriction along $\Theta^P_M\colon \Hecke_R(P)\to
\Hecke_R(M)$ if and only if $T_a$ acts
as an $R$-linear automorphism on $\ideal m$.
\end{cor}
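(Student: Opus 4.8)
The statement is a formal consequence of Theorem~\ref{thm:local}, so the plan is to extract it from the universal property of the left ring of fractions rather than to argue from scratch. First I would establish the easy direction: if $\ideal m = \Theta^{P*}_M(\ideal n)$ for some $\Hecke_R(M)$-module $\ideal n$, then $T_a$ acts on $\ideal m$ via $\Theta^P_M(T_a) = T^M_a$, which by Theorem~\ref{thm:local}\ref{thm:local-i} is an invertible element of $\Hecke_R(M)$ with inverse $T^M_{a^{-1}}$; hence $T_a$ acts as an $R$-linear automorphism, namely with inverse given by the action of $T^M_{a^{-1}}$.

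For the converse, suppose $T_a$ acts as an $R$-linear automorphism on the $\Hecke_R(P)$-module $\ideal m$. The key step is to promote this to an $\mathcal S_a$-inverting action: since $\mathcal S_a = \{T_a^n\}$ and each $T_a^n$ acts as the $n$-th power of an automorphism, every element of $\mathcal S_a$ acts invertibly on $\ideal m$. Equivalently, the structure homomorphism $\Hecke_R(P) \to \End_R(\ideal m)$ is $\mathcal S_a$-inverting. By the universal property of the left ring of fractions (which Theorem~\ref{thm:local} identifies with $\Theta^P_M\colon \Hecke_R(P)\to\Hecke_R(M)$), this homomorphism factors uniquely as $\Hecke_R(P)\xrightarrow{\Theta^P_M}\Hecke_R(M)\to\End_R(\ideal m)$, and the factoring map is an $R$-algebra homomorphism. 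This endows $\ideal m$ with an $\Hecke_R(M)$-module structure whose restriction along $\Theta^P_M$ recovers the original $\Hecke_R(P)$-module structure, which is exactly the claim.

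One point to be careful about is that the universal property in \cite[(10.1)]{Lam.1999} is stated for ring homomorphisms into arbitrary rings, and $\End_R(\ideal m)$ is such a ring; the $R$-linearity of the $T_a$-action is precisely what guarantees that the structure map lands in $\End_R(\ideal m)$ (an $R$-algebra) rather than merely in $\End_\Z(\ideal m)$, so that the resulting $\Hecke_R(M)$-action is again $R$-linear. I expect this bookkeeping — checking that all maps in sight are $R$-algebra homomorphisms and that the two module structures on $\ideal m$ genuinely agree after restriction — to be the only real content; it is routine given Theorem~\ref{thm:local}, and there is no substantive obstacle.
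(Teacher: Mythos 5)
Your proposal is correct and follows the same route as the paper, which simply says ``Clear from Theorem~\ref{thm:local}''; you have spelled out the universal-property argument that makes it clear, including the routine but worthwhile check that the structure map $\Hecke_R(P)\to\End_R(\ideal m)$ is $\mathcal S_a$-inverting and that the resulting $\Hecke_R(M)$-action is $R$-linear.
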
 
\begin{proof} 
Clear from Theorem~\ref{thm:local}.
\end{proof} 

Consider the ring $\mathcal A = \Image \Theta^P_{M,\Z}$, where $\Theta^P_{M,\Z}$
is the map $\Theta^P_M$ for $R=\Z$. Put $\mathcal A_R \coloneqq R\otimes_\Z
\mathcal A$. Then $\Theta^P_M$ factors as
\[
\begin{tikzcd}
\Hecke_R(P) \ar[r,two heads,"\pi"] \ar[dr,"\Theta^P_M"'] & \mathcal A_R
\ar[d,"\theta"]\\ & \Hecke_R(M),
\end{tikzcd}
\]
where $\pi = \id_R\otimes\Theta^P_{M,\Z}$. The algebra $\mathcal A_R$ was
denoted $\Hecke_R(M,G)$ and studied in \cite[section~5]{Heyer.2020} in the
context of Remark~\ref{rmk:parabolic}. Note that Theorem~\ref{thm:local} implies
that $\theta$ is a localization map.

If $S$ is a (possibly noncommutative) ring, we denote $\Mod S$ the category of
left $S$-modules.

\begin{prop}\label{prop:functors} 
There are fully faithful embeddings
\[
\begin{tikzcd}
\Mod{\Hecke_R(M)} \ar[r,hook,"\theta_*"] & \Mod{\mathcal A_R}
\ar[r,hook,"\pi_*"] & \Mod{\Hecke_R(P)}.
\end{tikzcd}
\]
Then $\Mod{\Hecke_R(M)}$ identifies with the full subcategory of those
$\Hecke_R(P)$-modules on which $T_a$ acts invertibly. The category
$\Mod{\mathcal A_R}$ contains all $\Hecke_R(P)$-modules $\ideal m$ with
$\Rad(\ideal m) = \{0\}$.
\end{prop}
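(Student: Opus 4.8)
The plan is to treat the two embeddings $\theta_*$ and $\pi_*$ separately, and then read off the descriptions of the two subcategories from results already established. Recall the elementary fact that restriction of scalars along a surjective ring homomorphism is fully faithful: if $f\colon S\twoheadrightarrow \ol S$ is surjective and $M,N$ are $\ol S$-modules, then any $S$-linear map $M\to N$ is automatically $\ol S$-linear, because every element of $\ol S$ lifts to $S$. Applied to $\pi\colon \Hecke_R(P)\twoheadrightarrow \mathcal A_R$ this shows $\pi_*$ is fully faithful. For $\theta_*$ I would use that $\theta$ exhibits $\Hecke_R(M)$ as a ring of fractions of $\mathcal A_R$ (as remarked above, this follows from Theorem~\ref{thm:local}): restriction along such a map — being a ring epimorphism — is fully faithful; alternatively, one checks directly that an $\mathcal A_R$-linear map $\varphi$ between $\Hecke_R(M)$-modules is $\Hecke_R(M)$-linear, by writing an arbitrary $t\in\Hecke_R(M)$ as $\Theta^P_M(s)^{-1}\Theta^P_M(X)$ with $s\in\mathcal S_a$ (Theorem~\ref{thm:local} \ref{thm:local-ii}), noting $\Theta^P_M(s)$ is a unit, and pushing the identity $\Theta^P_M(s)(t m) = \Theta^P_M(X)m$ through $\varphi$. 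Since restriction of scalars is functorial, the composite $\pi_*\circ\theta_*$ is restriction along $\theta\circ\pi = \Theta^P_M$, and it is fully faithful.

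A fully faithful restriction functor induces an equivalence of $\Mod{\Hecke_R(M)}$ onto its essential image, which is automatically a full subcategory of $\Mod{\Hecke_R(P)}$. By Corollary~\ref{cor:modules}, a left $\Hecke_R(P)$-module lies in that essential image — i.e.\ comes by restriction along $\Theta^P_M$ from an $\Hecke_R(M)$-module — precisely when $T_a$ acts on it as an $R$-linear automorphism. This yields the identification of $\Mod{\Hecke_R(M)}$ with the full subcategory of $\Mod{\Hecke_R(P)}$ of modules on which $T_a$ acts invertibly.

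It remains to show that every $\Hecke_R(P)$-module $\ideal m$ with $\Rad(\ideal m) = \{0\}$ lies in the essential image of $\pi_*$, i.e.\ that the $\Hecke_R(P)$-action descends along $\pi$ to an $\mathcal A_R$-action; equivalently, that $\Ker\pi$ annihilates $\ideal m$. The key observation is that $\Ker\pi \subseteq \Ker\Theta^P_M = \Rad(\Hecke_R(P))$, using $\Theta^P_M = \theta\circ\pi$ together with Theorem~\ref{thm:local} \ref{thm:local-iii}. Thus, given $Y\in\Ker\pi$, there is $n>0$ with $T_a^nY = 0$ in $\Hecke_R(P)$; then for every $x\in\ideal m$ one has $T_a^n(Yx) = (T_a^nY)x = 0$, so $Yx\in\Rad(\ideal m) = \{0\}$. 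Hence $Y$ annihilates $\ideal m$, the action descends, and $\ideal m$ lies in $\Mod{\mathcal A_R}$. I do not expect a serious obstacle here; the only point requiring some care is the full faithfulness of $\theta_*$, which rests on recognizing $\theta$ as a ring epimorphism (equivalently, on the fraction-description of $\Hecke_R(M)$ from Theorem~\ref{thm:local}).
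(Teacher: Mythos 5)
Your proposal is correct and follows essentially the same route as the paper: full faithfulness of $\pi_*$ from surjectivity of $\pi$, full faithfulness of $\theta_*$ from $\theta$ being a ring epimorphism (localization), the $T_a$-invertibility characterization from Corollary~\ref{cor:modules}, and the last claim via $\Ker\pi\subseteq\Ker\Theta^P_M=\Rad(\Hecke_R(P))$. Your version merely spells out the standard facts a bit more explicitly than the paper's terse proof.
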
 
\begin{proof} 
The fully faithfulness claim follows from the fact that $\pi$ is surjective and
$\theta$ is a localization map. The characterization of $\Mod{\Hecke_R(M)}$ is
the statement of Corollary~\ref{cor:modules}. Let $\ideal m$ be a left
$\Hecke_R(P)$-module. Note that $\Ker\pi \subseteq
\Ker\Theta^P_M = \Rad(\Hecke_R(P))$ and $\Rad(\Hecke_R(P)).\ideal m \subseteq
\Rad(\ideal m)$. Hence, $\Ker(\pi).\ideal m \subseteq \Rad(\ideal m)$. It
follows that if $\Rad(\ideal m) = \{0\}$, then $\ideal m$ lies in the essential
image of $\pi_*$.
\end{proof} 

\begin{rmk} 
The map $\theta\colon \mathcal A_R \to \Hecke_R(M)$ is not injective in general.
In this case, $\Rad(\mathcal A_R)\neq \{0\}$ which shows that the essential
image of $\pi_*$ contains $\Hecke_R(P)$-modules $\ideal m$ with $\Rad(\ideal
m)\neq \{0\}$. However, if $\theta$ is injective, then $\Mod{\mathcal A_R}$
identifies with the full subcategory of $\Mod{\Hecke_R(P)}$ consisting of all
$\ideal m$ with $\Rad(\ideal m) = \{0\}$.
\end{rmk} 

\appendix
\section{}
We now give a proof of Fact~\ref{fact} which was stated at the beginning of the
introduction.

Let $\alg P = \alg U\alg M$ be a parabolic subgroup in a connected reductive
$\field$-group, where $\field$ is a locally compact non-archimedean field.
Denote $Z$ the center of the Levi of $\alg M(\field)$. As every admissible
smooth $\alg M(\field)$-representation $\pi$ is locally $Z$-finite (meaning that
every vector in $\pi$ is contained in a finite-dimensional $Z$-invariant
subspace), Fact~\ref{fact} is a consequence of:

\begin{lem} 
Let $\pi$ be a smooth $\alg P(\field)$-representation which is locally
$Z$-finite. Then $\alg U(\field)$ acts trivially on $\pi$.
\end{lem}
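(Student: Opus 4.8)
The plan is to prove the statement directly: we show that $\pi(u)v=v$ for every $v\in\pi$ and every $u\in\alg U(\field)$. Fix such a $v$ and $u$. By local $Z$-finiteness, $v$ lies in a finite-dimensional $Z$-stable subspace $W\subseteq\pi$. Since $\pi$ is smooth, each element of a basis of $W$ is fixed by some compact open subgroup of $\alg P(\field)$; intersecting these finitely many subgroups gives a compact open subgroup of $\alg P(\field)$ fixing $W$ pointwise, and its intersection $\Gamma_U$ with $\alg U(\field)$ is then a compact open subgroup of $\alg U(\field)$ fixing $W$ pointwise. Next I would fix a strictly positive element $a$ of $\alg M(\field)$ lying in $Z$; such an element exists — one may take $a=\lambda(\varpi)$, where $\varpi$ is a uniformizer of $\field$ and $\lambda$ is a cocharacter of the maximal split torus in the center of $\alg M$ that is strictly dominant with respect to the roots occurring in $\alg U$ (compare \cite[II.4]{Vigneras.1998}). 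Because $a\in Z$ and $W$ is $Z$-stable, $\pi(a)$ restricts to an automorphism of $W$, so $\pi(a^n)W=W$ for all $n\in\Z$.

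The main step is then a short conjugation argument. By construction, conjugation by the positive powers of $a$ contracts $\alg U(\field)$ towards $1$: for every compact open subgroup $J$ of $\alg U(\field)$ one has $\alg U(\field)=\bigcup_{m>0}a^{-m}Ja^{m}$. Applying this with $J=\Gamma_U$ yields an integer $n>0$ with $a^{n}ua^{-n}\in\Gamma_U$. For this $n$ the vector $\pi(a^{n})v$ lies in $W$, hence is fixed by $\pi(a^{n}ua^{-n})$, and therefore $\pi(a^{n})\pi(u)v=\pi(a^{n}u)v=\pi(a^{n}ua^{-n})\,\pi(a^{n})v=\pi(a^{n})v$. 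Applying the invertible operator $\pi(a^{-n})$ gives $\pi(u)v=v$. As $u$ and $v$ were arbitrary, $\alg U(\field)$ acts trivially on $\pi$, which is the assertion.

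The argument is genuinely elementary, so there is no serious obstacle; the one point that must be handled with a little care is the first paragraph, where one has to produce a single finite-dimensional subspace $W$ that is simultaneously $Z$-stable — so that it is preserved by all powers of $\pi(a)$ — and pointwise fixed by a compact open subgroup of $\alg U(\field)$ — so that it absorbs the conjugate $a^{n}ua^{-n}$. The remaining ingredients, namely the existence of a strictly positive central element and the contraction property $\alg U(\field)=\bigcup_{m>0}a^{-m}Ja^{m}$, are standard facts about unipotent radicals of parabolics in reductive $p$-adic groups and follow at once from the root-group decomposition of $\alg U(\field)$; if one wishes to minimize external input, the entire proof can be carried out in the abstract framework of Section~\ref{sec:parabolic} as soon as a strictly positive element is in hand.
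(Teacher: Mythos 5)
Your proof is correct and follows essentially the same route as the paper's: both pick a finite-dimensional $Z$-stable subspace $W\ni v$, use smoothness to get a (compact) open subgroup of $\alg P(\field)$ fixing $W$ pointwise, conjugate $u$ into that subgroup via a strictly positive central element $a$, and conclude $\pi(u)v=v$ by the identity $\pi(u)v = \pi(a^{-n})\pi(a^nua^{-n})\pi(a^n)v$. The only difference is that you spell out in more detail how to produce $W$, the fixing subgroup, and $a$, while the paper cites these as standard.
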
 
\begin{proof} 
Fix a strictly positive element $a\in Z$ in the sense of
\cite[II.4]{Vigneras.1998}. Let $v\in \pi$ be arbitrary and fix a
finite-dimensional $Z$-invariant subvector space $V$ of $\pi$ containing $v$.
Since $\pi$ is smooth, there exists an open subgroup $H$ of $\alg P(\field)$
fixing $V$ elementwise. Given any $u\in \alg U(\field)$, we find $n\in\Z_{>0}$
with $a^nua^{-n}\in H\cap \alg U(\field)$. Now, $a^nv\in V$ and hence $uv =
a^{-n}\cdot (a^nua^{-n})\cdot a^nv = a^{-n}\cdot a^nv = v$.
\end{proof} 

\bibliographystyle{alphaurl}
\bibliography{references}{}
\end{document}